
\documentclass[12 pt]{article}
\usepackage[centertags]{amsmath}
\usepackage{amsfonts}
\usepackage{amssymb}
\usepackage{amsthm}
\usepackage{newlfont}
\usepackage{graphicx}
\usepackage{color}

\addtolength{\textwidth}{2 cm} \addtolength{\oddsidemargin}{-1cm}
\addtolength{\textheight}{2 cm} \addtolength{\topmargin}{-1cm}
\hfuzz2pt

\newtheorem{theor}{Theorem}
\newtheorem{pr}{Proposition}
\newtheorem{cl}{Corollary}
\newtheorem{lm}{Lemma}
\theoremstyle{definition}
\newtheorem{defn}{Definition}
\newtheorem{ex}{Example}
\theoremstyle{remark}

\numberwithin{equation}{section}

\begin{document}
\title{On extended quasi-Armendariz rings}
\author{\small{Hamideh Pourtaherian$^{1}$} ,
Isamiddin S.Rakhimov$^{2}$ \thanks{%
Corresponding author.\newline \textit{E-mail addresses:\small{
$^1$pourtaherian09@gmail.com, $^2$risamiddin@gmail.com}}}}
\date{}
\maketitle
\begin{center}
\scriptsize $^{1,2}$ Department of Mathematics, FS and
$^2$Institute for Mathematical Research (INSPEM)\\ UPM, 43400,
Serdang, Selangor Darul Ehsan, Malaysia.
\end{center}

\maketitle

\begin{abstract}
This paper deals with the quasi-Armendariz ring in general
setting. We generalize the notions of quasi-Armendariz and
$\alpha$-skew Armendariz ring to quasi $\alpha$-Armendariz and
quasi $\alpha$-skew Armendariz ring and investigate their
properties. We also consider the quasi Armendariz properties of
Laurent type rings.
\end{abstract}

\section{Introduction}

Let $R$ be a ring and $\alpha :R\longrightarrow R$ be an
endomorphism. Then $\alpha $-derivation $\delta $ of $R$ is an
additive map such that $\delta (ab)=\delta (a)b+\alpha (a)\delta
(b),$ for all $a,b\in R.$ \emph{The Ore extension} $R[x;\alpha
,\delta ]$ of $R$ is
the ring with the new multiplication $%
xr=\alpha (r)x+\delta (r)$ in the polynomial ring over $R,$
where $r\in R.$ If $\delta =0,$ we write $%
R[x;\alpha ]$ and it is said to be a skew polynomial ring (also
\emph{The Ore extension of endomorphism type}.)

Some properties of skew polynomial rings have been studied in
 \cite{shabih}, \cite{cortes}, \cite{com}, \cite{skew} and \cite{a}.

A ring $R$ is called \emph{Armendariz}, if whenever polynomials
$f\left( x\right) ~=~a_{0}+a_{1}x~+~...~+~a_{n}x^{n}, $ $\
g\left(x\right) =b_{0}+b_{1}x+...+b_{m}x^{m}\in R\left[ x\right]$
satisfy $f\left( x\right) g\left( x\right) =0,$ then
$a_{i}b_{j}=0$ for each $i$ and $j.$ (The converse is always
true). The name ``Armendariz'' was used by them because E.Armendariz
\cite{ep} had shown that every reduced ring (i.e., has no nonzero
nilpotent elements) satisfies this condition. Further properties
of Armendariz rings and related problems have been studied in
\cite{an}, \cite{hirano}, \cite{lee}, \cite{h} and \cite{rege}.

According to Krempa \cite{k}, an endomorphism $\alpha $ of a ring
$R$ is called rigid, if for $r\in R$ the condition $r\alpha (r)=0$
implies $r=0$ . In \cite{rigid}, a ring $R$ has been called
$\alpha $-rigid if there exists a rigid endomorphism $\alpha $ of
$R.$ In the same paper it has been shown also that any rigid
endomorphism of a ring is a monomorphism and $\alpha $-rigid rings
are reduced .

The concept of $\alpha $-skew Armendariz ring has been introduced
in \cite{skew} which is a generalization of $\alpha $-rigid ring
and Armendariz ring. A ring $R$ is said to be $\alpha $-skew
Armendariz ring, if for $p=\sum \limits_{i=0}^{m}a_{i}x^{i}$ and
$q=\sum \limits_{j=0}^{n}b_{j}x^{j}$ in $R[x;\alpha ]$ the
condition $pq=0$ implies $a_{i}\alpha^{i}(b_{j})=0$ for all $i$
and $j.$

The Armendariz property of rings was extended to skew polynomial
rings in \cite{a}. Following Hong et al \cite{a}, a ring $R$ is
called $\alpha$-Armendariz if for $p=\sum
\limits_{i=0}^{m}a_{i}x^{i}$ and $q~=~\sum
\limits_{j=0}^{n}b_{j}x^{j}$ in $R[x;\alpha ]$ the condition
$pq=0$ implies $a_{i}b_{j}=0$ for all $i$ and $j.$ $\alpha$-
Armendariz ring is a generalization of Armnedariz ring and
$\alpha$- rigid ring. The authors of \cite{a} proved that an
$\alpha$- Armendariz ring is $\alpha$-skew Armendariz.

In this paper we introduce the notion of quasi $\alpha$-
Armendariz ring, which is a generalization of $\alpha$- rigid and
quasi-Armendariz rings ($\alpha=I_{R}$), by considering the skew
polynomial ring $R[x;\alpha ]$ in place of the ring $R[x].$
Motivated by result of \cite{hirano}, \cite{skew}, we investigate
a generalization of $\alpha $-skew Armendariz rings, which we call
quasi $\alpha $-skew Armendariz rings. Here, we extend the results
of \cite{hirano}, \cite{skew}, \cite{rigid}, \cite{a} to quasi
$\alpha $- Armendariz (or simply $q.\alpha$-Armendariz) and quasi
$\alpha $-skew Armendariz rings (or simply $q.\alpha$-skew
Armendariz). The notions of quasi $\alpha$-Armendariz and quasi
$\alpha$-skew Armendariz rings are useful in understanding the
relationships between annihilators of rings $R$ and $R[x;\alpha
].$ We also introduce the notion of Laurent $q.\alpha$-skew
Armendariz ring and $q.\alpha$-skew power series Armendariz ring
of Laurent type ring $R$ with respect to an automorphism $\alpha$
of $R. $ We do this by considering the quasi-Armendariz condition
on polynomials in $R[x, x^{-1};\alpha ]$ and $R[[x, x^{-1}; \alpha
]]$ instead of $R[x;\alpha ]. $ This provides an opportunity to
study qausi-Armendariz rings in more general setting. The quasi
$\alpha$-skew Armendariz property was also studied under the name
``\emph{$\alpha$-skew quasi-Armendariz}'', by Hong et al.
\cite{q}. The conditions (SQA1)-(SQA4) in \cite{com} are skew
polynomial, skew power series, skew Laurent polynomial and skew
Laurent power series versions of quasi-Armendariz rings. Cortes
\cite{cortes} used the term \emph{quasi-skew Armendariz} for what
is called
\emph{$\alpha$-skew quasi-Armendariz} when $\alpha$ is an automorphism.\\
In \cite{shabih} Ba\c{s}er and Kwak introduced the concept of
$\alpha$-quasi-Armendariz ring. A ring $R$ is called
\emph{quasi-Armendariz ring with the endomorphism $\alpha$}(or
simply \emph{$\alpha$-quasi-Armendariz}) if for
$p(x)=a_{0}+a_{1}x+...+a_{m}x^{m}, \ \ q(x)
=b_{0}+b_{1}x+...+b_{n}x^{n}$ in $R[x;\alpha ]$ satisfy $p(x)
R[x;\alpha]q(x)=0, $ implies  $a_{i}R[x; \alpha]b_{j}=0$ for all
$0\leq i\leq m$ and $0\leq j\leq n$ or equivalently, $a_{i}R
\alpha^{t}(b_{j})=0$ for any nonnegative integer $t$ and all $i,
j$ \cite{shabih}. Ba\c{s}er and Kwak \cite{shabih} also showed
that every $\alpha$-quasi-Armendariz ring is $\alpha$-skew
quasi-Armendariz in case that $\alpha$ is an epimorphism; but the
converse does not hold, in general.

The organization of the paper is as follows. First, we introduce
quasi $\alpha $- Armendariz and quasi $\alpha $-skew Armendariz
rings and investigate their properties (Section 2). Then in
Section 3 we consider Laurent $q.\alpha $-skew Armendariz ring and
Laurent $q.\alpha$-skew Armendariz power series.

Onward throughout the paper, $\alpha $ stands for an
 endomorphism of $R, $ unless
specially noted. The set of all integer numbers and the set of all
rational numbers are denoted by $\mathbb{Z}$ and $\mathbb{Q}$,
respectively. We keep the standard notation $\mathbb{Z}_{4}$ for
the ring of integers module $4$.

\section{Extensions of quasi-Armendariz property}
In this section, we introduce two new classes of rings, which are
generalizations of quasi-Armendariz rings, replacing $R[x]$ by the
skew polynomial ring $R[x;\alpha ]$. We begin with the following
definition.
\begin{defn}
A ring $R$ is called a \emph{quasi $\alpha$-Armendariz ring} (or
simply \emph{$q.\alpha$-Armendariz}) ring if whenever $p=\sum
\limits_{i=0}^{m}a_{i}x^{i}$ and $q=\sum
\limits_{j=0}^{n}b_{j}x^{j}$ in $R[x;\alpha ]$ satisfy
$pR[x;\alpha]q=0, $ we have $a_{i}R b_{j}=0$ for each $i$ and $j.$
\end{defn}

It is easily  to see that an $\alpha$-rigid ring is
$q.\alpha$-Armendariz.

\begin{defn}
A ring $R$ is called a \emph{quasi $\alpha$-skew Armendariz ring}
(or simply \emph{$q.\alpha$-skew Armendariz}) if whenever
$p=a_{0}+a_{1}x+...+a_{m}x^{m}, \ \ q
=b_{0}+b_{1}x+...+b_{n}x^{n}$ in $R[x;\alpha ]$ satisfy
$pR[x;\alpha]q=0, $ implies  $a_{i}R \alpha^{i}(b_{j})=0$ for all
$0\leq i\leq m$ and $0\leq j\leq n.$
\end{defn}
Recently, Hong et al. (see \cite{q}) proved that for reduced ring
$R$ with an endomorphism $\alpha$ and $p=\sum
\limits_{i=0}^{m}a_{i}x^{i}$ and $q=\sum
\limits_{j=0}^{n}b_{j}x^{j} \in R[x;\alpha ]$ the following holds
true:

 $pR[x;\alpha]q=0$ if and only if $a_{i}R
\alpha^{i+t}(b_{j})=0$ for any integer $t\geq 0$ and $i, j.$ By
using the result of Hong et al. we show that, reduced rings are
$q.\alpha$-skew Armendariz.
\begin{pr} A reduced ring with an endomorphism $\alpha$ is $q.\alpha$-skew
Armendariz.
\end{pr}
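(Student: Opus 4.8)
The plan is to derive the statement directly from the characterization of Hong et al. quoted immediately above the proposition. Recall that, by definition, to show $R$ is $q.\alpha$-skew Armendariz I must verify that for any $p = \sum_{i=0}^{m} a_i x^i$ and $q = \sum_{j=0}^{n} b_j x^j$ in $R[x;\alpha]$, the hypothesis $pR[x;\alpha]q = 0$ forces $a_i R\, \alpha^i(b_j) = 0$ for all $0 \le i \le m$ and $0 \le j \le n$.

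First I would fix such a pair $p, q$ satisfying $pR[x;\alpha]q = 0$ and invoke the cited equivalence for reduced rings: $pR[x;\alpha]q = 0$ holds if and only if $a_i R\, \alpha^{i+t}(b_j) = 0$ for every integer $t \ge 0$ and all $i, j$. Only the forward implication is needed here. Next I would specialize the conclusion to the single value $t = 0$, which yields exactly $a_i R\, \alpha^i(b_j) = 0$ for all $i, j$, i.e. the defining condition for $R$ to be $q.\alpha$-skew Armendariz. Since $p$ and $q$ were arbitrary, this completes the argument.

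The main point to note is that there is essentially no obstacle once the Hong et al. characterization is granted: the entire analytic content, namely the passage from the ideal annihilation $pR[x;\alpha]q = 0$ to the coefficient-wise relations using that $R$ is reduced, is packaged inside that result, and the proposition is merely the weakest ($t=0$) instance of its conclusion. If one instead wanted a self-contained argument, the real work would lie in re-proving that equivalence, i.e. showing that reducedness allows one to separate the coefficients of $pR[x;\alpha]q = 0$ across all shifts $\alpha^{i+t}$; but as the result is already available in the excerpt, the statement follows as an immediate corollary.
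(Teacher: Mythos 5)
Your argument is logically sound: the forward direction of the quoted Hong et al.\ equivalence, specialized to $t=0$, is verbatim the condition $a_i R\,\alpha^{i}(b_j)=0$ defining a $q.\alpha$-skew Armendariz ring, so the proposition does follow as an immediate corollary of that citation. The paper, however, does not actually invoke the cited equivalence inside its proof (despite announcing that it will); it gives a self-contained induction on $i+j$: from $prq=0$ for every $r\in R$ it reads off the coefficient of $x^{k}$, substitutes $r=b_0$ (then $b_1$, and so on), kills the lower-order terms by the induction hypothesis so as to isolate $a_{k}\alpha^{k}(b_0)\alpha^{k}(b_0)=0$, and then uses reducedness --- passing through $(\alpha^{k}(b_0)a_{k})^{2}=0$ --- to conclude $a_{k}R\,\alpha^{k}(b_0)=0$. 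Your route buys a one-line proof at the cost of transferring all of the mathematical content to an external theorem; the paper's route is longer but shows exactly where reducedness enters, and it amounts to a proof of the $t=0$ portion of the very equivalence you cite. As you note yourself, a self-contained treatment would require essentially the paper's induction (or a full reproof of the Hong et al.\ equivalence), so be aware that your version is acceptable only if the instructor permits the citation to stand as a black box.
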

\begin{proof} Let $R$ be a reduced ring with an endomorphism
$\alpha$ and $p=\sum \limits_{i=0}^{m}a_{i}x^{i}$ and $q=\sum
\limits_{j=0}^{n}b_{j}x^{j}$ be elements of $R[x;\alpha ]$ such
that $pR[x;\alpha]q=0. $ Then for any $r \in R$ we have,
\begin{equation}
(a_{0}+a_{1}x+...+a_{m}x^{m})r (b_{0}+b_{1}x+...+b_{n}x^{n})=0
\end{equation}
We claim that $a_{i}R \alpha^{i}(b_{j})=0$ for any $i, j.$ We
proceed by induction on $i+j. $ If $i+j=0, $ then
$a_{0}R(b_{0})=0. $ Assume that for $i+j=k-1$ where $1\leq k\leq
m+n, $ $a_{i}R \alpha^{i}(b_{j})=0$ is true. The coefficient of
$x^{k}$ is zero, so we obtain:
\begin{equation}
a_{0}rb_{k}+a_{1}\alpha(rb_{k-1})+...+a_{k}\alpha^{k}(rb_{0})=0
\end{equation}
We replace $r$ by $b_{0}$ in (2.2) we get
\begin{equation}
a_{0}b_{0}b_{k}+a_{1}\alpha(b_{0}b_{k-1})+...+a_{k}\alpha^{k}(b_{0}^{2})=0
\end{equation}
Thus $a_{k}\alpha^{k}(b_{0})\alpha^{k}(b_{0})=0$ by the induction
hypothesis. Since $R$ is reduced we get
$\alpha^{k}(b_{0})a_{k}\alpha^{k}(b_{0})=0. $ Therefore,
$(\alpha^{k}(b_{0})a_{k})^{2}=0, $ and $\alpha^{k}(b_{0})a_{k}=0,$
then $a_{k} R \alpha^{k}(b_{0})=0. $ Hence (2.2) becomes
\begin{equation}
a_{0}rb_{k}+a_{1}\alpha(rb_{k-1})+...+a_{k-1}\alpha^{k-1}(rb_{1})=0
\end{equation}
Next We replace $r$ by $b_{1}$ in (2.4), then $a_{k-1}
\alpha^{k-1}(b_{1})=0, $ so $a_{k-1} R \alpha^{k-1}(b_{1})=0. $
With the same method as above we have, $a_{i}R \alpha^{i}(b_{j})=0
$ for any $i+j=k. $ Consequently we have $a_{i}R
\alpha^{i}(b_{j})=0$ for any $0\leq i\leq m$ and $0\leq j\leq n.$
Therefore $R$ is $q.\alpha$-skew Armendariz ring.
\end{proof}
\begin{cl}
$\alpha$-rigid rings are $q.\alpha$-skew Armendariz.
\end{cl}
Here there is an example of $q.\alpha$-skew Armendariz ring which
is not $\alpha$-skew Armendariz.
\begin{ex}
We refer to the example from \cite{skew}, Let
$R=\mathbb{Z}_{2}\oplus \mathbb{Z}_{2}$ with endomorphism $\alpha
:R\longrightarrow R$ defined by $\alpha((a,b))=(b,a). $ The ring
$R$ is reduced, so it is $q.\alpha$-skew Armendariz ring but is
not $\alpha$-skew Armendariz.
\end{ex}

 Ba\c{s}er et al. (see \cite{mccoy}) proved that for a ring isomorphism
$\sigma :R\longrightarrow S,$ the ring $R$ is an $\alpha$-skew
McCoy if and only if $S$ is a $(\sigma \alpha \sigma^{-1})$-skew
McCoy ring. The next theorem is a counterpart of this result in
$q.\alpha$-Armendariz and $q.\alpha$-skew Armendariz rings cases.
\begin{theor}
Let $\sigma$ be an isomorphism from a ring $R$ into a ring $S.$
Then one has
\begin{enumerate}
 \item $R$ is an $\alpha$-skew Armendariz ring if and only if $S$
 is a $(\sigma \alpha \sigma^{-1})$-skew Armendariz.
\item $R$ is an $\alpha$-Armendariz ring if and only if $S$
 is a $(\sigma \alpha \sigma^{-1})$-Armendariz.
 \item $R$ is a $q.\alpha$-Armendariz ring if and only if $S$
 is a $q.(\sigma \alpha \sigma^{-1})$-Armendariz.
 \item $R$ is a $q.\alpha$-skew Armendariz ring if and only if $S$
 is a $q.(\sigma \alpha \sigma^{-1})$-skew Armendariz.
\end{enumerate}
\end{theor}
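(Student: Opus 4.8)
The plan is to prove all four statements by a single transport-of-structure argument, exploiting the fact that an isomorphism $\sigma:R\to S$ induces a ring isomorphism $\bar\sigma:R[x;\alpha]\to S[x;\beta]$, where $\beta=\sigma\alpha\sigma^{-1}$. First I would verify that the map $\bar\sigma$ sending $\sum_i a_ix^i$ to $\sum_i\sigma(a_i)x^i$ is a well-defined ring isomorphism. Additivity is immediate; the point requiring care is compatibility with the twisted multiplication. In $R[x;\alpha]$ one has $xa=\alpha(a)x$, so I must check that in $S[x;\beta]$ the image $x\,\sigma(a)$ equals $\sigma(\alpha(a))\,x$. Since $\beta(\sigma(a))=\sigma\alpha\sigma^{-1}(\sigma(a))=\sigma(\alpha(a))$, the relation $x\,\sigma(a)=\beta(\sigma(a))\,x=\sigma(\alpha(a))\,x$ holds exactly, so $\bar\sigma$ respects the multiplication on generators and hence on all of $R[x;\alpha]$. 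Because $\sigma$ is a bijection, so is $\bar\sigma$, with inverse induced by $\sigma^{-1}$.

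Next I would record the key translation dictionary between the two rings. For $p=\sum_i a_ix^i$ and $q=\sum_j b_jx^j$ in $R[x;\alpha]$, write $\bar p=\bar\sigma(p)=\sum_i\sigma(a_i)x^i$ and $\bar q=\bar\sigma(q)$ in $S[x;\beta]$. Since $\bar\sigma$ is a ring isomorphism carrying $R[x;\alpha]$ onto $S[x;\beta]$, we have the equivalences $pq=0\iff\bar p\,\bar q=0$ and $pR[x;\alpha]q=0\iff\bar p\,S[x;\beta]\,\bar q=0$; the latter uses that $\bar\sigma$ maps $R[x;\alpha]$ onto $S[x;\beta]$, so applying $\bar\sigma$ to a product $p\,h\,q$ with $h\in R[x;\alpha]$ ranges over all products $\bar p\,\bar h\,\bar q$ with $\bar h\in S[x;\beta]$. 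On the coefficient side, the conclusions to be matched are: $a_ib_j=0\iff\sigma(a_i)\sigma(b_j)=0$ (Armendariz-type), $a_iR\,b_j=0\iff\sigma(a_i)S\,\sigma(b_j)=0$ (quasi-Armendariz-type), $a_i\alpha^i(b_j)=0\iff\sigma(a_i)\beta^i(\sigma(b_j))=0$ (skew type), and $a_iR\,\alpha^i(b_j)=0\iff\sigma(a_i)S\,\beta^i(\sigma(b_j))=0$ (quasi-skew type). The crucial identity making the skew cases work is $\beta^i=\sigma\alpha^i\sigma^{-1}$, which follows by induction from $\beta=\sigma\alpha\sigma^{-1}$, giving $\beta^i(\sigma(b_j))=\sigma(\alpha^i(b_j))$.

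With these two observations each of the four biconditionals follows by the same formal pattern, and I would write the argument once, indicating the minor variations. For part (3), suppose $R$ is $q.\alpha$-Armendariz and take $\bar p,\bar q\in S[x;\beta]$ with $\bar p\,S[x;\beta]\,\bar q=0$; pulling back along $\bar\sigma^{-1}$ gives $pR[x;\alpha]q=0$, so $a_iR\,b_j=0$, and applying $\sigma$ yields $\sigma(a_i)S\,\sigma(b_j)=0$, i.e. $\bar a_i\,S\,\bar b_j=0$, which is the $q.\beta$-Armendariz conclusion for $\bar p,\bar q$; since every pair in $S[x;\beta]$ arises this way, $S$ is $q.\beta$-Armendariz. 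The reverse implication is symmetric, using that $\sigma^{-1}$ is an isomorphism with $\alpha=\sigma^{-1}\beta\sigma$. Parts (1), (2), and (4) are identical in structure, replacing the coefficient condition by the appropriate one from the dictionary and inserting the identity $\beta^i\circ\sigma=\sigma\circ\alpha^i$ wherever a skew power appears.

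The argument is essentially routine once the two lemmas above are in place; the only genuine point of care, and thus the main obstacle, is the bookkeeping in the skew cases (parts 1 and 4), where one must correctly propagate the twist: the conclusion $a_iR\,\alpha^i(b_j)=0$ involves $\alpha^i$ acting on $b_j$, and it is exactly the relation $\beta^i(\sigma(b_j))=\sigma(\alpha^i(b_j))$ that guarantees $\sigma$ sends this to the matching condition $\sigma(a_i)S\,\beta^i(\sigma(b_j))=0$ in $S[x;\beta]$. I would verify $\beta^i=\sigma\alpha^i\sigma^{-1}$ explicitly at the start so that all four parts can be dispatched uniformly without further computation.
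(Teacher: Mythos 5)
Your proposal is correct and follows essentially the same route as the paper: both rest on the identity $(\sigma\alpha\sigma^{-1})^i=\sigma\alpha^i\sigma^{-1}$ and on the equivalence $pR[x;\alpha]q=0\iff \bar p\,S[x;\sigma\alpha\sigma^{-1}]\,\bar q=0$, which the paper verifies by the same coefficient computation that you package as the statement that $\bar\sigma$ is a ring isomorphism of the skew polynomial rings. The four cases are then dispatched by the identical pull-back/push-forward pattern in both arguments.
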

\begin{proof}
Let $a'$ stand for the image of an element $a$ of $R$ under
$\sigma.$ Then one has
$$p=\sum \limits_{i=0}^{m}a_{i}x^{i}, \ q=\sum
\limits_{j=0}^{n}b_{j}x^{j}\in R[x;\alpha ] \ \ \mbox{if and only
if} \ \ p'=\sum \limits_{i=0}^{m}a'_{i}x^{i}, \ q'=\sum
\limits_{j=0}^{n}b'_{j}x^{j} \in S[x;\sigma \alpha \sigma^{-1} ].
$$

First we prove that $$pR[x;\alpha]q=0 \ \ \mbox{if and only if} \
\ p'S[x;\sigma \alpha \sigma^{-1}]q'=0.$$ Indeed, suppose that $p
R[x;\alpha ]q=0. $ Let $h=\sum \limits_{l=0}^{t}c_{l}x^{l}\in
R[x;\alpha ]$ and $h'=\sum
\limits_{l=0}^{t}c'_{l}x^{l}$ be elements of ~$S[x;\sigma \alpha \sigma^{-1} ]. $\\
Then $phq=0$ if and only if

$\sum \limits_{i+j+l=k}a_{i}\alpha^{i}(c_{l})\alpha^{i}(b_{j})=0$
for each $0\leq k\leq m+n+t, $

if and only if $\sum
\limits_{i+j+l=k}\sigma(a_{i})\sigma(\alpha^{i}(c_{l}))\sigma(\alpha^{i}(b_{j}))=\sum
\limits_{i+j+l=k}\sigma(a_{i}) (\sigma \alpha
\sigma^{-1})^{i}\sigma(c_{l})(\sigma \alpha
\sigma^{-1})^{i}\sigma(b_{j})=\\  \sum \limits_{i+j+l=k}a'_{i}
(\sigma \alpha \sigma^{-1})^{i}(c'_{l})(\sigma \alpha
\sigma^{-1})^{i}(b'_{j})=0,$

for each $0\leq k\leq m+n+t, $ ($(\sigma \alpha
\sigma^{-1})^{i}=\sigma \alpha^{i} \sigma^{-1}$ for any integer
$i$) if and only if $p'h'q'=0.$ Since $h$ runs $R[x;\alpha ]$  its
image $h'$ runs $S[x;\sigma\alpha \sigma^{-1}].$ Therefore we get
$p'S[x;\sigma \alpha \sigma^{-1}]q'=0.$

Now we turn to the assertions of the theorem.
\begin{enumerate}
\item Suppose that $R$ is $\alpha$-skew Armendariz. Let $p'q'=0,$
then according to \cite{mccoy} we get $pq=0. $ Hence $a_{i}
\alpha^{i}(b_{j})=0$ for each $0\leq i\leq m, 0 \leq j \leq n, $
due to $R$ is $\alpha$-skew Armendariz. Since $\sigma$ is
monomorphism, we obtain $\sigma(a_{i}
\alpha^{i}(b_{j}))=\sigma(a_{i})(\sigma \alpha
\sigma^{-1})^{i}\sigma(b_{j})=a'_{i}(\sigma \alpha
\sigma^{-1})^{i}b'_{j}=0$ for all $i$ and $j.$ Hence $S$ is
$(\sigma \alpha \sigma^{-1})$-skew Armendariz ring. The converse
is proved similarly. \item Assume that $ R$ is $\alpha$-Armendariz
ring. Let $p'q'=0, $ then $pq=0. $ Hence $a_{i}b_{j}=0$ for all
$0\leq i\leq m, 0 \leq j \leq n. $ If apply $\sigma$ we get
$\sigma(a_{i}b_{j})=\sigma(a_{i})\sigma(b_{j})=a'_{i}b'_{j}=0$ for
all $i$ and $j. $ Therefore $S$ is $(\sigma \alpha
\sigma^{-1})$-Armendariz ring. The converse is similar.\item It is
proved by the same method as the above.
  \item Suppose that $R$ is
$q.\alpha$-skew Armendariz. Let $p'S[x; \sigma\alpha
\sigma^{-1}]q'=0, $ then we have $pR[x;\alpha]q=0, $ and $a_{i} R
\alpha^{i}(b_{j})=0$ for each $0\leq i\leq m, 0 \leq j \leq n. $
If applying $\sigma$ we get $\sigma(a_{i})\sigma(R) (\sigma \alpha
\sigma^{-1})^{i} \sigma(b_{j})=a'_{i} S (\sigma \alpha
\sigma^{-1})^{i} b'_{j}=0, $ for all $i$ and $j. $ Therefore $S$
is $q.(\sigma \alpha \sigma^{-1})$-skew Armendariz ring.
 The "only if part" again is similar.
\end{enumerate}
\end{proof}
\begin{pr}
Let $R_1$ and $R_2$ be rings. Let $\alpha_1$ and $\alpha_2$ be
endomorphisms of $R_1$ and $R_2,$ respectively. Consider the
endomorphism $\alpha=(\alpha_1,\alpha_2)$ of the direct product
$R=R_1\oplus R_2$ defined by
$\alpha(x_1,x_2)=(\alpha_1(x_1),\alpha_2(x_2)).$ Then
\begin{enumerate}
\item If $R_{1}$ and $R_{2}$ are $\alpha_{1}$-Armendariz and
$\alpha_{2}$-Armendariz, respectively, then $R$ is an
$\alpha$-Armendariz ring. \item If $R_{1}$ and $R_{2}$ are
$\alpha_{1}$-skew Armendariz and $\alpha_{2}$-skew Armendariz
ring, respectively, then $R$ is an $\alpha$-skew Armendariz ring.
\end{enumerate}
\end{pr}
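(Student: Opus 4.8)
The plan is to prove both parts by relating multiplication in the Ore extension $R[x;\alpha]$ to coordinatewise multiplication in $R_1[x;\alpha_1]$ and $R_2[x;\alpha_2]$. The key observation is that the direct product decomposition $R = R_1 \oplus R_2$ is compatible with the skew polynomial structure: the map sending $\sum_i (a_i, c_i) x^i$ to the pair $\left(\sum_i a_i x^i, \sum_i c_i x^i\right)$ is a ring isomorphism $R[x;\alpha] \cong R_1[x;\alpha_1] \oplus R_2[x;\alpha_2]$. This holds because the twisted multiplication rule $x(u,v) = (\alpha_1(u), \alpha_2(v))x$ respects the coordinates, since $\alpha = (\alpha_1, \alpha_2)$. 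First I would verify this isomorphism explicitly, as it is the structural backbone of the argument.

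**The main step.**

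For part (2), suppose $p = \sum_{i=0}^m a_i x^i$ and $q = \sum_{j=0}^n b_j x^j$ in $R[x;\alpha]$ satisfy $pq = 0$, where $a_i = (a_i^{(1)}, a_i^{(2)})$ and $b_j = (b_j^{(1)}, b_j^{(2)})$. Under the isomorphism above, writing $p = (p_1, p_2)$ and $q = (q_1, q_2)$ with $p_k = \sum_i a_i^{(k)} x^i$ and $q_k = \sum_j b_j^{(k)} x^j$, the equation $pq = 0$ decomposes into $p_1 q_1 = 0$ in $R_1[x;\alpha_1]$ and $p_2 q_2 = 0$ in $R_2[x;\alpha_2]$. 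Applying the $\alpha_k$-skew Armendariz hypothesis to each factor yields $a_i^{(1)} \alpha_1^i(b_j^{(1)}) = 0$ for all $i,j$ and $a_i^{(2)} \alpha_2^i(b_j^{(2)}) = 0$ for all $i,j$. Recombining coordinates, $a_i \alpha^i(b_j) = \left(a_i^{(1)} \alpha_1^i(b_j^{(1)}),\, a_i^{(2)} \alpha_2^i(b_j^{(2)})\right) = (0,0)$ for all $i$ and $j$, which is exactly the $\alpha$-skew Armendariz condition. Part (1) is identical except that the conclusion drops the twist: one uses $a_i^{(k)} b_j^{(k)} = 0$ from the $\alpha_k$-Armendariz property and recombines to get $a_i b_j = 0$.

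**The main obstacle.**

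The only genuine point requiring care is confirming that the coefficient identity for the product in a skew polynomial ring splits coordinatewise under the isomorphism. Concretely, the coefficient of $x^k$ in $pq$ is $\sum_{i+j=k} a_i \alpha^i(b_j)$, and one must check that applying $\alpha = (\alpha_1, \alpha_2)$ to an element of $R_1 \oplus R_2$ acts independently on each coordinate, so that $a_i \alpha^i(b_j)$ has first coordinate $a_i^{(1)} \alpha_1^i(b_j^{(1)})$ and second coordinate $a_i^{(2)} \alpha_2^i(b_j^{(2)})$. Once this is established, the vanishing of each $x^k$-coefficient in $R[x;\alpha]$ is equivalent to the simultaneous vanishing of the corresponding coefficients in $R_1[x;\alpha_1]$ and $R_2[x;\alpha_2]$, and the rest is a routine recombination. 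I expect no serious difficulty beyond this bookkeeping; the essential content is the observation that direct products of rings induce direct products of their Ore extensions when the endomorphism is taken coordinatewise.
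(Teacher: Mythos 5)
Your proposal is correct and follows essentially the same route as the paper: both decompose $p$ and $q$ coordinatewise, use that $pq=(p_1q_1,p_2q_2)=0$ forces each factor to vanish in $R_k[x;\alpha_k]$, apply the hypothesis in each coordinate, and recombine. The only difference is that you make the underlying isomorphism $R[x;\alpha]\cong R_1[x;\alpha_1]\oplus R_2[x;\alpha_2]$ explicit, which the paper uses implicitly.
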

\begin{proof}
Let $p=\sum \limits_{i=0}^{m}(a_{i},b_{i})x^{i}, \ q=\sum
\limits_{j=0}^{n}(c_{j},d_{j})x^{j}$ be elements of $R[x;\alpha ]$
with $pq=0. $ Let $p=(p_{0}, p_{1}), \ q~=~(q_{0}, q_{1}), $ where
$p_{0}=\sum \limits_{i=0}^{m}a_{i}x^{i}, \ q_{0}=\sum
\limits_{j=0}^{n}c_{j}x^{j} \in R_{1}[x;\alpha_{1} ]$ and
$p_{1}=\sum \limits_{i=0}^{m}b_{i}x^{i}, \ q_{1}=\sum
\limits_{j=0}^{n}d_{j}x^{j} \in R_{2}[x;\alpha_{2} ].$
\begin{enumerate}
\item Suppose that $pq=(p_{0}, p_{1})(q_{0}, q_{1})=(p_{0}q_{0},
p_{1}q_{1})=0.$ Then $p_{0}q_{0}=0$ and $p_{1}q_{1}=0.$ However,
by the hypotheses of the theorem $a_{i}c_{j}=0$ and
$b_{i}d_{j}=0.$ Therefore, $(a_{i}, b_{i})(c_{j},
d_{j})=(a_{i}c_{j}, b_{i}d_{j})=0$ and it implies that $R$ is an
$\alpha$-Armendariz ring. \item Like to the previous case
$pq=(p_{0}, p_{1})(q_{0}, q_{1})=(p_{0}q_{0}, p_{1}q_{1})=0$
implies $p_{0}q_{0}=0,$ and $p_{1}q_{1}=0. $ According to the
assumptions on $R_{1}$ and $R_{2}$ we get
$a_{i}\alpha^{i}_{1}(c_{j})=0, $ and
$b_{i}\alpha^{i}_{2}(d_{j})=0.$ Then $(a_{i},
b_{i})\alpha^{i}((c_{j}, d_{j}))=(a_{i}\alpha^{i}_{1}(c_{j}),
b_{i}\alpha^{i}_{2}(d_{j}))=0,$ therefore $R$ is an $\alpha$-skew
Armendariz ring.
\end{enumerate}
\end{proof}

It is obvious that the assertions of the previous proposition can
be easily extended to finite number rings case.

 Recall that, a ring is said to be \emph{semi-commutative}, if whenever elements
$a,b\in R$ satisfy $a b=0$, then $aRb=0. $
\begin{pr}
Let $R$ be a semi-commutative  $\alpha$-skew Armendariz ring and
$\alpha(1)=1. $ Then $R$ is $q.\alpha$-skew Armendariz ring.
\end{pr}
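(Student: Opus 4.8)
The plan is to reduce the ideal-annihilation hypothesis $pR[x;\alpha]q=0$ to the plain product condition $pq=0$, invoke the $\alpha$-skew Armendariz hypothesis to control the coefficients, and then spread the resulting relations across $R$ using semi-commutativity. First I would fix $p=\sum_{i=0}^{m}a_{i}x^{i}$ and $q=\sum_{j=0}^{n}b_{j}x^{j}$ in $R[x;\alpha]$ with $pR[x;\alpha]q=0$, and observe that the hypothesis $\alpha(1)=1$ guarantees that the identity $1$ of $R$ is the identity of the Ore extension $R[x;\alpha]$, so that $1$ is a legitimate element of $R[x;\alpha]$. Specializing the ideal condition to this element gives $p\cdot 1\cdot q=pq=0$.

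Next, since $R$ is $\alpha$-skew Armendariz and $pq=0$, the definition immediately yields $a_{i}\alpha^{i}(b_{j})=0$ for all $0\leq i\leq m$ and $0\leq j\leq n$. Finally I would apply semi-commutativity: for each pair $i,j$ the relation $a_{i}\alpha^{i}(b_{j})=0$ is a vanishing product of the two elements $a_{i}$ and $\alpha^{i}(b_{j})$ of $R$, so semi-commutativity gives $a_{i}R\,\alpha^{i}(b_{j})=0$. This is exactly the conclusion required for $R$ to be $q.\alpha$-skew Armendariz.

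The argument is short and free of serious computation; the only points needing care are the two places where the hypotheses are consumed. The assumption $\alpha(1)=1$ is indispensable for passing from the ideal relation to $pq=0$, since it is what makes $1$ available as the identity of $R[x;\alpha]$ and lets us extract the bare product. Semi-commutativity is precisely the bridge converting the coefficientwise vanishing $a_{i}\alpha^{i}(b_{j})=0$ into the stronger $a_{i}R\,\alpha^{i}(b_{j})=0$; without it one would only recover the $\alpha$-skew Armendariz conclusion rather than its quasi counterpart. Thus the \emph{main subtlety}, rather than a genuine obstacle, is simply to keep track of which hypothesis delivers which half of the required conclusion.
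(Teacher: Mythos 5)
Your proposal is correct and follows essentially the same route as the paper: specialize the condition $pR[x;\alpha]q=0$ at the identity (legitimate because $\alpha(1)=1$ makes $1$ a two-sided identity of $R[x;\alpha]$) to get $pq=0$, apply the $\alpha$-skew Armendariz hypothesis to obtain $a_i\alpha^i(b_j)=0$, and then use semi-commutativity to upgrade this to $a_iR\,\alpha^i(b_j)=0$. Your write-up is, if anything, slightly more explicit than the paper's about why $\alpha(1)=1$ is needed.
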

\begin{proof}
Let $p,q$ be elements of $R[x;\alpha], $ with $pR[x; \alpha]q=0. $
Since $R[x;\alpha]$ is a ring with identity, we get $pq=0. $ Hence
$a_{i}\alpha^{i}(b_{j})=0. $ By the hypothesis, $R$ is
semi-commutative, then $a_{i}R \alpha^{i}(b_{j})=0. $ Therefore
$R$ is $q.\alpha$-skew Armendariz ring.
\end{proof}

 Since for $\alpha$-Armendariz rings $\alpha(1)=1 $  (see \cite{a}) then as a consequence of Proposition 3 one can say that a semi-commutative
$\alpha$-Armendariz ring is $q.\alpha$-skew Armendariz. It is also
easily can be seen that a semi-commutative $\alpha$-Armendariz
ring is $q.\alpha$-Armendariz.

Recall that a ring $R$ is said to be
 \emph{symmetric}, if $abc=0$ $\implies $
$bac=0$ for all $a,b,c\in R,$ \emph{reversible}, if $ab=0$ implies
$ba=0$ for all $a,b\in R.$
\begin{cl}
 Let $R$ be a semi-commutative (in particular, reduced, commutative,
symmetric, reversible) ring. If $R$ is $\alpha$-skew Armendariz
and $\alpha(1)=1$ then $R$ is $q.\alpha$-skew Armendariz ring.
\end{cl}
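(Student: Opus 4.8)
The plan is to recognize that this corollary is nothing more than Proposition 3 restated for four special classes of rings. Proposition 3 already establishes that a semi-commutative $\alpha$-skew Armendariz ring with $\alpha(1)=1$ is $q.\alpha$-skew Armendariz, and the standing hypotheses here are exactly ``$\alpha$-skew Armendariz'' and $\alpha(1)=1$. Hence the whole task reduces to checking that each of the listed classes---reduced, commutative, symmetric, reversible---is semi-commutative; once that is done, Proposition 3 applies verbatim and the conclusion follows. So I would organize the argument as: funnel all four classes down to semi-commutativity, then invoke Proposition 3.

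First I would show that commutative, symmetric, and reduced rings are all reversible. If $R$ is commutative and $ab=0$, then trivially $ba=ab=0$. If $R$ is symmetric, then taking $c=1$ in the defining implication $abc=0\Rightarrow bac=0$ turns $ab=0$ into $ba=0$, so $R$ is reversible. If $R$ is reduced and $ab=0$, then $(ba)^2=b(ab)a=0$, and since $R$ has no nonzero nilpotents this forces $ba=0$; thus reduced rings are reversible as well.

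It then remains to verify that every reversible ring is semi-commutative, which is the single step carrying genuine content. Suppose $R$ is reversible and $ab=0$. Reversibility gives $ba=0$, and left-multiplying by an arbitrary $r\in R$ yields $rba=(rb)a=0$. Applying reversibility a second time, now to the pair $(rb,a)$, converts $(rb)a=0$ into $a(rb)=arb=0$. As $r$ was arbitrary, $aRb=0$, so $R$ is semi-commutative. Combining this with the previous paragraph, each of the four classes---reduced, commutative, symmetric, reversible---is semi-commutative.

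Finally I would close by quoting Proposition 3: under the hypotheses that $R$ is $\alpha$-skew Armendariz and $\alpha(1)=1$, semi-commutativity is precisely what is required to deduce that $R$ is $q.\alpha$-skew Armendariz. The only place where any real work occurs is the reversible-implies-semi-commutative step, whose crux is the regrouping $rba=(rb)a$ followed by a second application of reversibility; every other inclusion is a one-line verification, so I do not expect any serious obstacle here.
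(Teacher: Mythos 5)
Your proof is correct and follows essentially the same route as the paper: the paper's own proof is simply ``immediate from Proposition 3,'' and you invoke Proposition 3 in exactly the same way. The only difference is that you explicitly verify that reduced, commutative, symmetric, and reversible rings are semi-commutative (via reversibility), a routine check the paper leaves implicit in the parenthetical; your verification is accurate.
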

\begin{proof} The proof is immediate from Proposition 3.
\end{proof}
The following proposition is an extension of a result from
\cite{skew}.

\begin{pr}
Let $R$ be a domain and $\alpha$ be a monomorphism of $R. $ Then
$R$ is $q.\alpha$-skew Armendariz ring.
\end{pr}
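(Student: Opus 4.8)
The plan is to exploit the fact that, under these hypotheses, the skew polynomial ring $R[x;\alpha]$ is itself a domain, which collapses the $q.\alpha$-skew Armendariz condition to a triviality. So rather than performing the inductive coefficient-chasing used in Proposition 1, I would first upgrade the ring-theoretic structure and then read off the conclusion.

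First I would establish that $R[x;\alpha]$ has no nonzero zero divisors. Given nonzero $p=\sum_{i=0}^{m}a_{i}x^{i}$ and $q=\sum_{j=0}^{n}b_{j}x^{j}$ with $a_{m}\neq 0$ and $b_{n}\neq 0$, the multiplication rule $x^{i}r=\alpha^{i}(r)x^{i}$ gives $pq=\sum_{i,j}a_{i}\alpha^{i}(b_{j})x^{i+j}$, whose top-degree coefficient is $a_{m}\alpha^{m}(b_{n})$. Since $\alpha$ is a monomorphism, $\alpha^{m}$ is injective, so $\alpha^{m}(b_{n})\neq 0$; and since $R$ is a domain, $a_{m}\alpha^{m}(b_{n})\neq 0$. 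Hence $pq\neq 0$, proving that $R[x;\alpha]$ is a domain.

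Next, suppose $p,q\in R[x;\alpha]$ satisfy $pR[x;\alpha]q=0$. Taking the element $1\in R\subseteq R[x;\alpha]$ inside the product yields $pq=0$. Because $R[x;\alpha]$ is a domain, either $p=0$ or $q=0$; that is, all $a_{i}=0$ or all $b_{j}=0$. In the first case $a_{i}R\alpha^{i}(b_{j})=0$ trivially; in the second case $\alpha^{i}(b_{j})=0$ for all $i,j$, so again $a_{i}R\alpha^{i}(b_{j})=0$. Either way the defining condition holds, and $R$ is $q.\alpha$-skew Armendariz.

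There is essentially no serious obstacle here: the only computation of substance is the leading-coefficient argument showing $R[x;\alpha]$ is a domain, and the injectivity of $\alpha$ together with the domain hypothesis on $R$ is exactly what that step consumes. The remainder is a direct application of the definition, once one observes that the identity $1\in R[x;\alpha]$ forces $pR[x;\alpha]q=0$ to imply $pq=0$.
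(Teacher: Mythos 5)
Your proof is correct, and it takes a cleaner route than the paper's. The paper fixes the least index $k$ with $a_{k}\neq 0$, inserts an arbitrary nonzero $c\in R$, and reads off the coefficient of $x^{k}$ in $pcq$ to conclude $\alpha^{k}(b_{0})=0$, then iterates through $b_{1},\dots,b_{n}$ to force $q=0$; your argument isolates the real content as the standard lemma that $R[x;\alpha]$ is a domain whenever $R$ is a domain and $\alpha$ is injective (via the leading coefficient $a_{m}\alpha^{m}(b_{n})$ of $pq$), after which $pR[x;\alpha]q=0$ collapses to $p=0$ or $q=0$ in one line. The two computations are morally the same cancellation (you look at the top-degree term of $pq$, the paper at the lowest nonvanishing term of $pcq$), but packaging it as "$R[x;\alpha]$ is a domain" buys you more: the same conclusion $p=0$ or $q=0$ immediately yields $a_{i}Rb_{j}=0$ as well, so your argument proves Proposition 5 (the $q.\alpha$-Armendariz case) simultaneously, which the paper only asserts "can be handled similarly." One cosmetic remark: you invoke $1\in R$ to pass from $pR[x;\alpha]q=0$ to $pq=0$; the paper does assume unital rings (it uses the identity explicitly in Proposition 3), but if one wanted to avoid this, replacing $1$ by any nonzero $c$ and noting $pc\neq 0$ in the domain $R[x;\alpha]$ works just as well.
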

\begin{proof}
Let $p=a_{0}+a_{1}x+...+a_{m}x^{m},q =b_{0}+b_{1}x+...+b_{n}x^{n}$
be elements of $R[x;\alpha ], $ with $pR[x;\alpha]q=0. $ We claim
that $a_{i}R \alpha^{i}(b_{j})=0, $ for all $i$ and $j.$ Assume
that there exists $a_{k}\neq 0$ such that
$a_{0}=a_{1}=...=a_{k-1}=0, $ where $0\leq k\leq m. $ Let $c$ be a
nonzero arbitrary element of $R, $ hence $pcq=0. $ The coefficient
of $x^{k}$ in $pcq$ is $a_{0}c b_{k}+a_{1} \alpha(c)
\alpha(b_{k-1})+...+a_{k}\alpha^{k}(c)\alpha^{k}(b_{0}). $ It is
zero. Since $R$ is domain and $\alpha$ is monomorphism, we obtain
$a_{k}\alpha^{k}(c)\alpha^{k}(b_{0})=0$ and so
$\alpha^{k}(b_{0})=0.$\\
 Following the same
way as above it is easy to see that $\alpha^{k}(b_{1})=0. $
Continuing this process we derive,
$\alpha^{k}(b_{0})=\alpha^{k}(b_{1})=...=\alpha^{k}(b_{n})=0. $ So
$a_{i}R \alpha^{i}(b_{j})=0, $ for any $0\leq i\leq m, 0\leq j\leq
n. $ Therefore $R$ is $q.\alpha$-skew Armendariz ring.
\end{proof}

The proof of the following proposition can be  handled similarly.
\begin{pr}
Let $R$ be a domain and $\alpha$ be a monomorphism of $R. $ Then
$R$ is $q.\alpha$-Armendariz ring.
\end{pr}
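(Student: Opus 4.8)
The plan is to mirror the proof of Proposition 5, exploiting the fact that over a domain the ``skew'' and ``plain'' conclusions coincide. First I would record the basic computation in $R[x;\alpha]$: for a constant $c\in R$, the relation $x^{i}c=\alpha^{i}(c)x^{i}$ gives
$$pcq=\sum_{k}\Big(\sum_{i+j=k}a_{i}\alpha^{i}(c)\alpha^{i}(b_{j})\Big)x^{k},$$
so the hypothesis $pR[x;\alpha]q=0$ (which contains $pcq=0$ for every $c\in R$, taking $h=c$) forces $\sum_{i+j=k}a_{i}\alpha^{i}(c)\alpha^{i}(b_{j})=0$ for all $c$ and all $k$. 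The target $a_{i}Rb_{j}=0$ is, in a domain, equivalent to ``$a_{i}=0$ or $b_{j}=0$'' (evaluate at $r=1$ for one direction), and since $\alpha$ is injective this is in turn equivalent to ``$a_{i}=0$ or $\alpha^{i}(b_{j})=0$.'' Thus it suffices to show that $p\neq 0$ forces $q=0$.

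Assuming $p\neq 0$, let $a_{k}$ be its lowest nonzero coefficient, so $a_{0}=\cdots=a_{k-1}=0$. Then the coefficient of $x^{k}$ collapses to $a_{k}\alpha^{k}(c)\alpha^{k}(b_{0})=0$; since $R$ is a domain and $a_{k}\neq 0$, and since $\alpha^{k}(c)\neq 0$ for $c\neq 0$ (as $\alpha$ is a monomorphism), we conclude $\alpha^{k}(b_{0})=0$, hence $b_{0}=0$. I would then induct on $\ell$: in the coefficient of $x^{k+\ell}$, every term with $i<k$ dies because $a_{i}=0$, and every term with $i>k$ has second index $j=k+\ell-i<\ell$ and dies because $b_{j}=0$ is already known, so only $a_{k}\alpha^{k}(c)\alpha^{k}(b_{\ell})=0$ survives, giving $b_{\ell}=0$. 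Hence every $b_{j}=0$, i.e.\ $q=0$, and in particular $a_{i}Rb_{j}=0$ for all $i,j$, so $R$ is $q.\alpha$-Armendariz.

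The step I expect to require the most care is the inductive bookkeeping that isolates the single surviving term $a_{k}\alpha^{k}(c)\alpha^{k}(b_{\ell})$ in the coefficient of $x^{k+\ell}$: one must verify that the constraints $i\geq k$ (from $a_{i}=0$ for $i<k$) and $j<\ell$ (forcing $b_{j}=0$ by the induction hypothesis) together eliminate everything except the $i=k$, $j=\ell$ term. Everything else is routine cancellation using the domain property and injectivity of $\alpha$. As a shortcut one can instead invoke Proposition 5 directly: it yields $a_{i}R\alpha^{i}(b_{j})=0$, and evaluating at $r=1$, together with injectivity of $\alpha$ and the domain property, immediately gives $a_{i}Rb_{j}=0$.
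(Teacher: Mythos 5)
Your proof is correct and follows essentially the same route as the paper, which proves this proposition only by remarking that it ``can be handled similarly'' to Proposition 5: you isolate the lowest nonzero coefficient $a_k$ of $p$, use the vanishing of the coefficient of $x^{k+\ell}$ in $pcq$ together with the domain property and injectivity of $\alpha$ to force $q=0$, exactly as in the paper's argument for the skew case. Your closing shortcut (deduce $a_iRb_j=0$ from Proposition 5's conclusion $a_iR\alpha^i(b_j)=0$ via the domain property and injectivity of $\alpha$) is also valid and is arguably the cleanest reading of the paper's ``handled similarly'' remark.
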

\begin{lm}
Let $R$ be an $\alpha$-skew Armendariz ring. If $P_{i_{1}},...,
P_{i_{n}}\in R[x;\alpha ]$ with $P_{i_{1}}P_{i_{2}}...P_{i_{n}}=0,
$ then
$a_{i_{1}}\alpha^{i_{1}}(a_{i_{2}})\alpha^{i_{1}+i_{2}}(a_{i_{3}})
... \alpha^{i_{1}+...+i_{n-1}}(a_{i_{n}})=0, $ where $a_{i_{j}}$
is an appropriate coefficient of $P_{i_{j}}$ for each $j$.
\end{lm}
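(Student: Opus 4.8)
The plan is to induct on the number $n$ of factors, the case $n=2$ being exactly the definition of an $\alpha$-skew Armendariz ring (from $P_{i_1}P_{i_2}=0$ one reads off $a_{i_1}\alpha^{i_1}(a_{i_2})=0$). The whole difficulty lies in the inductive step, and it is worth pinning down at the outset where the naive idea breaks. If one groups the last $n-1$ factors into a single $Q=P_{i_2}\cdots P_{i_n}$ and applies the two-factor property to $P_{i_1}Q=0$, one does learn $a_{i_1}\alpha^{i_1}(c_s)=0$ for the coefficients $c_s$ of $Q$; but each $c_s$ is a \emph{sum} of products of the coefficients of $P_{i_2},\dots,P_{i_n}$, so this controls the sums, not the individual summands the lemma asks about, and there is a priori room for cancellation. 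The remedy is to refrain from unpacking $c_s$: instead one peels a factor off the \emph{front} and re-absorbs it, deferring the unpacking to the induction hypothesis.

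Concretely, I would write $Q=\sum_{s}c_s x^{s}=P_{i_2}\cdots P_{i_n}$ and apply the $\alpha$-skew Armendariz property to the genuine two-factor product $P_{i_1}Q=0$; here the two-factor case legitimately gives the \emph{termwise} conclusion $a_{i_1}\alpha^{i_1}(c_s)=0$ for every $s$. This is the crucial step, because it upgrades to a polynomial identity: computing $a_{i_1}x^{i_1}Q=\sum_s a_{i_1}\alpha^{i_1}(c_s)x^{i_1+s}$ shows $a_{i_1}x^{i_1}Q=0$. Thus the monomial $a_{i_1}x^{i_1}$ may be absorbed into the next factor without ever decomposing the $c_s$.

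Setting $P'_{i_2}:=a_{i_1}x^{i_1}P_{i_2}$, we obtain $P'_{i_2}P_{i_3}\cdots P_{i_n}=a_{i_1}x^{i_1}Q=0$, a product of only $n-1$ elements of $R[x;\alpha]$. Since left multiplication by the monomial $a_{i_1}x^{i_1}$ shifts every degree by $i_1$ and twists coefficients by $\alpha^{i_1}$ (and $i_1$ is fixed, so distinct degrees stay distinct), the coefficient of $x^{i_1+i_2}$ in $P'_{i_2}$ is precisely $a_{i_1}\alpha^{i_1}(a_{i_2})$, with no collision. Feeding the $(n-1)$-fold product into the induction hypothesis, with this distinguished coefficient placed in degree $i_1+i_2$, yields
$$
a_{i_1}\alpha^{i_1}(a_{i_2})\,\alpha^{i_1+i_2}(a_{i_3})\,\alpha^{i_1+i_2+i_3}(a_{i_4})\cdots\alpha^{i_1+\cdots+i_{n-1}}(a_{i_n})=0 ,
$$
which is the desired identity. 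The only point needing care is the bookkeeping of the $\alpha$-powers after absorption: the degree $i_1+i_2$ carried by $P'_{i_2}$ must combine with the later degrees so that the successive twists read $\alpha^{i_1+i_2},\alpha^{i_1+i_2+i_3},\dots$, and a one-line check that $(i_1+i_2)+i_3+\cdots+i_{n-1}=i_1+\cdots+i_{n-1}$ confirms the exponents land where the statement prescribes. Beyond this indexing there is no genuine obstacle.
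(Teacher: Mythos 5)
Your proof is correct, and it rests on the same basic strategy as the paper's --- induction on $n$, with the two--factor ($\alpha$-skew Armendariz) property driving the inductive step --- but you run the induction from the opposite end and, more importantly, you supply the step the paper leaves implicit. The paper groups the product as $(P_{i_1}\cdots P_{i_{n-1}})P_{i_n}=0$ and simply asserts the conclusion ``by the induction hypothesis and the definition,'' without addressing exactly the point you flag at the outset: the coefficients of $P_{i_1}\cdots P_{i_{n-1}}$ are sums of twisted products, so the two--factor property by itself only annihilates those sums, and the induction hypothesis cannot be applied to $P_{i_1}\cdots P_{i_{n-1}}$ since that product need not vanish. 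To make the paper's version rigorous one must absorb a coefficient $b_t$ of $P_{i_n}$ into $P_{i_{n-1}}$ (using $c_s\alpha^{s}(b_t)=0$ for all $s$ to conclude $P_{i_1}\cdots P_{i_{n-2}}(P_{i_{n-1}}b_t)=0$) and then apply the induction hypothesis to that $(n-1)$-fold product; this is the mirror image of your absorption of $a_{i_1}x^{i_1}$ into $P_{i_2}$ via the identity $a_{i_1}x^{i_1}Q=0$. Your front-peeling variant has the small advantage that the absorbed factor is a monomial, so the distinguished coefficient of $P'_{i_2}$ in degree $i_1+i_2$ is visibly the single term $a_{i_1}\alpha^{i_1}(a_{i_2})$ with no collisions, and your bookkeeping of the $\alpha$-exponents checks out. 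In short: same induction, opposite peeling direction, with the key absorption step --- glossed over in the paper --- made explicit.
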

\begin{proof}
We prove the lemma by induction on $n. $  If $n=1$ then from
$P_{i_{1}}=0$ we obtain $a_{i_{1}}=0. $  For $n=2$ by the
definition of $\alpha$-skew Armendariz we have
$a_{i_{1}}a_{i_{2}}=0. $ Suppose that our claim
is true for $n-1$ we prove for $n. $\\
Let $(P_{i_{1}}... P_{i_{n-1}})P_{i_{n}}=0, $ by the induction
hypothesis and using the definition of $\alpha$-skew Armendariz
ring, we have $(a_{i_{1}}\alpha^{i_{1}}(a_{i_{2}}) ...
\alpha^{i_{1}+...+i_{n-2}}(a_{i_{n-1}}))
\alpha^{i_{1}+...+i_{n-1}}(a_{i_{n}})=0.$
\end{proof}
\begin{theor}
Let $R$ be an $\alpha$-skew Armendariz ring. Then $R$ is
$q.\alpha$-skew Armendariz for any epimorphism $\alpha$ of $R.$
\end{theor}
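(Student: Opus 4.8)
The plan is to reduce the two-sided annihilator condition $pR[x;\alpha]q=0$ to a family of ordinary ``product-is-zero'' statements to which the $\alpha$-skew Armendariz hypothesis applies directly. Writing $p=\sum_{i=0}^{m}a_{i}x^{i}$ and $q=\sum_{j=0}^{n}b_{j}x^{j}$, I would first note that to obtain the desired conclusion it already suffices to specialize the middle element of $pR[x;\alpha]q=0$ to constants: for every $c\in R$ (viewed as a degree-$0$ element of $R[x;\alpha]$) we have $pcq=0$.

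The central idea is to absorb the constant $c$ into the left factor. Using the defining relation $x^{i}c=\alpha^{i}(c)x^{i}$, the product $pc$ is again a genuine element of $R[x;\alpha]$, namely $pc=\sum_{i=0}^{m}a_{i}\alpha^{i}(c)\,x^{i}$, whose degree-$i$ coefficient is $a_{i}\alpha^{i}(c)$. Thus $pcq=0$ becomes the vanishing of a product $(pc)q$ of two elements of $R[x;\alpha]$, and the $\alpha$-skew Armendariz hypothesis on $R$ applies verbatim to $(pc)q=0$. It yields $\bigl(a_{i}\alpha^{i}(c)\bigr)\alpha^{i}(b_{j})=0$, that is $a_{i}\,\alpha^{i}(c)\,\alpha^{i}(b_{j})=0$, for all $i,j$ and every $c\in R$.

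It remains to upgrade $\alpha^{i}(c)$ to an arbitrary element of $R$, and this is exactly where the epimorphism hypothesis enters. Since $\alpha$ is surjective, each power $\alpha^{i}$ is surjective as well, so as $c$ runs through $R$ the element $\alpha^{i}(c)$ runs through all of $R$. Hence $a_{i}\,r\,\alpha^{i}(b_{j})=0$ for every $r\in R$, i.e. $a_{i}R\,\alpha^{i}(b_{j})=0$ for all $0\le i\le m$ and $0\le j\le n$, which is precisely the $q.\alpha$-skew Armendariz condition. The only delicate point is this last step: without surjectivity one would recover $a_{i}\,\alpha^{i}(c)\,\alpha^{i}(b_{j})=0$ only for middle factors lying in the image of $\alpha^{i}$, which would not reach the full relation $a_{i}R\,\alpha^{i}(b_{j})=0$; surjectivity of $\alpha$ is what forces the image of the middle term to fill out all of $R$, and it is the essential use of the epimorphism assumption.
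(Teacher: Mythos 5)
Your proof is correct, and it follows the same overall strategy as the paper's: specialize the middle factor of $pR[x;\alpha]q=0$ to constants $c\in R$, deduce $a_{i}\,\alpha^{i}(c)\,\alpha^{i}(b_{j})=0$ for all $i,j$, and then use surjectivity of $\alpha$ (hence of each $\alpha^{i}$) to let $\alpha^{i}(c)$ sweep out all of $R$. The one place you genuinely diverge is the middle step: you fold $c$ into the left factor via $x^{i}c=\alpha^{i}(c)x^{i}$, so that $pc=\sum_{i}a_{i}\alpha^{i}(c)x^{i}$ is again a polynomial and the $\alpha$-skew Armendariz hypothesis applies in one stroke to $(pc)q=0$. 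The paper instead first proves a separate lemma on products of several polynomials to handle the constant term, and then runs an iterative degree-lowering argument, peeling off $a_{0}$, factoring out $x$, and reapplying the hypothesis to the shortened polynomials. Your absorption trick makes that lemma and the iteration unnecessary and is the cleaner route; you also correctly isolate surjectivity as the essential ingredient in the final upgrade from $\alpha^{i}(c)$ to an arbitrary element of $R$, which the paper leaves implicit.
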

\begin{proof}
Suppose that $R$ is an $\alpha$-skew Armendariz, $p=\sum
\limits_{i=0}^{m}a_{i}x^{i}$ and $q=\sum
\limits_{j=0}^{n}b_{j}x^{j}$ are in $R[x;\alpha ]$, with
$pR[x;\alpha ]q=0. $ Then, in particular, for an arbitrary element
$c$ of $R, $ we get $pcq=0. $ Therefore,
$a_{0}\alpha^{0}(c)\alpha^{0}(b_{j})=a_{0}cb_{j}=0, $ for all
$0\leq j\leq n, $  by using the previous lemma.\\
$pcq=a_{0}c(b_{0}+b_{1}x+...+b_{n}x^{n})+(a_{1}x+...+a_{m}x^{m})c(b_{0}+b_{1}x+...+b_{n}x^{n})=(a_{1}x+...+a_{m}x^{m})c(b_{0}+b_{1}x+...+b_{n}x^{n}).
$

Since $a_{0}cb_{j}=0$ for all $0\leq j\leq n $ we obtain
$0=(a_{1}+a_{2}x+...+a_{m}x^{m-1})xc(b_{0}+b_{1}x+...+b_{n}x^{n})$
$=(a_{1}+a_{2}x+...+a_{m}x^{m-1})\alpha(c)(\alpha(b_{0})x+...+\alpha(b_{n})x^{n+1}).
$\\
 Put $p_{1}=a_{1}+a_{2}x+...+a_{m}x^{m-1}$ and
$q_{1}=\alpha(b_{0})x+...+\alpha(b_{n})x^{n+1}. $

Since $R$ is $\alpha$-skew Armendariz we get $a_{i} \alpha^{i}(c)
\alpha^{i}(b_{j})=0$ for all $1\leq i\leq m, 0\leq j\leq n. $\\
$0=a_{1} \alpha(c)(\alpha(b_{0})x+...+\alpha(b_{n})x^{n+1})+
(a_{2}x+...+a_{m}
x^{m-1})\alpha(c)(\alpha(b_{0})x+...+\alpha(b_{n})x^{n+1})$

$=(a_{2}x+...+a_{m}
x^{m-1})\alpha(c)(\alpha(b_{0})x+...+\alpha(b_{n})x^{n+1})$

$=(a_{2}+a_{3}x+...+a_{m}x^{m-2})\alpha^{2}(c)(\alpha^{2}(b_{0})x^{2}+...+\alpha^{2}(b_{n})x^{n+2}).$
\\
Put $p_{2}=a_{2}+a_{3}x+...+a_{m}x^{m-2},
q_{2}=\alpha^{2}(b_{0})x^{2}+...+\alpha^{2}(b_{n})x^{n+2}. $ Then
if one uses again the fact that $R$ is $\alpha$-skew Armendariz
$p_{2} \alpha^{2}(c) q_{2}=0$ implies $a_{i}
\alpha^{i}(c)\alpha^{i}(b_{j})=0$ for each $2\leq i\leq m, 0\leq
j\leq n.$\\
Continuing this process, we derive $a_{i} R \alpha^{i}(b_{j})=0$
for each $0\leq i\leq m, 0\leq j\leq n. $ Therefore $R$ is
$q.\alpha$-skew Armendariz ring.
\end{proof}
\begin{cl}
Let $R$ be an $\alpha$-Armendariz ring. Then $R$ is
$q.\alpha$-skew Armendariz for any epimorphism $\alpha$ of $R.$
\end{cl}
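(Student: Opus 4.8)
The plan is to derive this corollary as an immediate consequence of Theorem 2 together with a known implication already recorded in the Introduction. Recall that, following Hong et al. \cite{a} (as noted in Section 1), every $\alpha$-Armendariz ring is $\alpha$-skew Armendariz: if $p=\sum_{i=0}^{m}a_{i}x^{i}$ and $q=\sum_{j=0}^{n}b_{j}x^{j}$ in $R[x;\alpha]$ satisfy $pq=0$, then $a_{i}b_{j}=0$ for all $i,j$, and since $\alpha$ is an endomorphism this forces $a_{i}\alpha^{i}(b_{j})=0$ as well. Thus the hypothesis ``$R$ is $\alpha$-Armendariz'' is strictly stronger than ``$R$ is $\alpha$-skew Armendariz.''

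Given this, the argument is a one-line chaining. First I would observe that since $R$ is $\alpha$-Armendariz, it is in particular $\alpha$-skew Armendariz. Then I would invoke Theorem 2, whose hypothesis is exactly that $R$ be $\alpha$-skew Armendariz and whose conclusion, for an epimorphism $\alpha$, is that $R$ is $q.\alpha$-skew Armendariz. Combining these two facts yields that $R$ is $q.\alpha$-skew Armendariz for any epimorphism $\alpha$ of $R$, which is the assertion of the corollary.

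Because the corollary rests entirely on established results, there is no genuine computational obstacle; the only point requiring care is verifying that the hypotheses line up, namely that the $\alpha$-Armendariz condition indeed delivers the $\alpha$-skew Armendariz condition needed to apply Theorem 2. If one preferred a self-contained argument avoiding Theorem 2, one could instead replay the inductive reduction in the proof of Theorem 2 verbatim, at each stage applying the $\alpha$-Armendariz property (via Lemma 1, which remains valid since $\alpha$-Armendariz rings are $\alpha$-skew Armendariz) to the products $p_k\,\alpha^{k}(c)\,q_k$; this would in fact produce the even stronger conclusion $a_{i}R\,b_{j}=0$ before passing to $a_{i}R\,\alpha^{i}(b_{j})=0$. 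However, the economical route through Theorem 2 is preferable, and I expect the intended proof to be precisely the short deduction described above.
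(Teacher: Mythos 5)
Your proposal is correct and is precisely the paper's (implicit) argument: the corollary is stated immediately after Theorem~2 with no separate proof, the intended deduction being exactly that an $\alpha$-Armendariz ring is $\alpha$-skew Armendariz (the result of Hong et al.\ \cite{a} quoted in the Introduction) and Theorem~2 then applies. One small caveat: your parenthetical gloss that $a_{i}b_{j}=0$ ``forces $a_{i}\alpha^{i}(b_{j})=0$ since $\alpha$ is an endomorphism'' is not a valid justification (applying $\alpha^{i}$ yields $\alpha^{i}(a_{i})\alpha^{i}(b_{j})=0$, not $a_{i}\alpha^{i}(b_{j})=0$), but this does not matter since the implication you need is exactly the cited theorem of \cite{a}, which both you and the paper invoke as known.
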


The previous theorem includes a result of Hong et al. from
\cite{a}, as a particular case.

For an ideal $I$ of $R, $ if $\alpha(I)\subseteq I$ then
 $\bar\alpha : R/I \longrightarrow R/I$ defined by
 $\bar\alpha(a+I)=\alpha(a)+I$ is an endomorphism of $R/I. $\\
 The homomorphic image of a $q.\alpha$-skew Armendariz ring need
 not  be so.
 \begin{ex} We refer to the example from \cite{skew}.
 Consider a ring $R=\left\{\left(
\begin{array}{cc}
a & 0 \\
\bar b & a
\end{array}
\right)| a\in \mathbb{Z}, \bar b \in \mathbb{Z}_{4} \right\}. $
Let $\alpha :R\longrightarrow R$ be the endomorphism defined by
$\alpha\left(\left(
\begin{array}{cc}
a & 0 \\
\bar b & a%
\end{array}%
\right)\right) =\left(
\begin{array}{cc}
a & 0 \\
-\bar b & a
\end{array}%
\right).$\\
The ring $R$ is $q.\alpha$-skew Armendariz. Notice that for
$\left(
\begin{array}{cc}
a & 0 \\
\bar b& a
\end{array}%
\right), \left(
\begin{array}{cc}
c & 0 \\
\bar d& c
\end{array}%
\right) \in R. $ Addition and multiplication are performed as
follows:

 $(a, \bar b)+(c, \bar d)=(a+c, \bar b+ \bar d)$ and $(a,
\bar b).(c, \bar d)=(ac, a\bar d+ \bar b c). $ Let $p=\sum
\limits_{i=0}^{m}\left(
\begin{array}{cc}
a_{i} & 0 \\
\bar b_{i}& a_{i}
\end{array}%
\right)x^{i}=(a_{i}, \bar b_{i}) x^{i}$ and $q=\sum
\limits_{j=0}^{n}\left(
\begin{array}{cc}
c_{j} & 0 \\
\bar d_{j}& c_{j}
\end{array}%
\right)x^{j}=(c_{j}, \bar d_{j}) x^{j} \in R[x;\alpha ]$, with
$pq=0. $\\
$(a_{0}, \bar b_{0})(c_{0}, \bar d_{0})=(a_{0}c_{0}, a_{0} \bar
d_{0}+\bar b_{0}c_{0})=0.$ Therefore, we have $a_{0}c_{0}=0$ and
$a_{0}\bar d_{0}+\bar b_{0} c_{0}=0. $ By using the fact that
$\mathbb{Z}$ is reduced, we obtain $\bar b_{0}c_{0}=0$ and then
$a_{0}\bar d_{0}=0.$ From $(a_{0}, \bar b_{0})(c_{1}, \bar d_{1})+
(a_{1}, \bar b_{1})(c_{0}, -\bar d_{0})=0$ we get
$a_{0}c_{1}+a_{1}c_{0}=0. $ If we multiply on the left side by
$c_{0}, $ then we derive $a_{1}c_{0}=0 $ and $a_{0}c_{1}=0. $
Continuing this process, we have $a_{i}c_{j}=a_{i}\bar d_{j}=\bar
b_{i}c_{j}=0$ for all $0 \leq i \leq m, 0 \leq j \leq n.$ Hence
$\left(
\begin{array}{cc}
a_{i} & 0 \\
\bar b_{i} & a_{i}
\end{array}%
\right).\left(
\begin{array}{cc}
c_{j} & 0 \\
\bar d_{j} & c_{j}
\end{array}%
\right)=0$ for all $i$ and $j. $ Therefore $R$ is $\alpha$-skew
Armendariz and due to Theorem 2 $R$ is $q.\alpha$-skew Armendariz
ring.

 For the ideal $I=\left\{\left(
\begin{array}{cc}
a & 0 \\
0 & a
\end{array}
\right)| a\in 4\mathbb{Z} \right\}$of $R, $ the quotient ring
$R/I\cong\left\{\left(
\begin{array}{cc}
\bar a & 0 \\
\bar b & \bar a
\end{array}
\right)| \bar a, \bar b \in \mathbb{Z}_{4} \right\}$ is not
$q.\bar \alpha$-skew Armendariz.

Indeed, $( \left(
\begin{array}{cc}
\bar 2 & 0 \\
\bar 0 & \bar 2
\end{array}%
\right)+\left(
\begin{array}{cc}
\bar 2 & 0 \\
\bar 1 & \bar 2
\end{array}%
\right)x)R/I[x;\bar \alpha ]( \left(
\begin{array}{cc}
\bar 2 & 0 \\
\bar 0 & \bar 2
\end{array}%
\right)+\left(
\begin{array}{cc}
\bar 2 & 0 \\
\bar 1 & \bar 2
\end{array}%
\right)x)=0. $

However $\left(
\begin{array}{cc}
\bar 2 & 0 \\
\bar 1 & \bar 2
\end{array}%
\right)(R/I)\bar \alpha(\left(
\begin{array}{cc}
\bar 2 & 0 \\
\bar 0 & \bar 2
\end{array}%
\right))\neq 0.$
 \end{ex}
Here is an example of commutative $\alpha$-Armendariz,
$q.\alpha$-Armendariz and $q.\alpha$-skew Armendariz ring that is
not $\alpha$-rigid.

 Given a ring $R$ and a bimodule $_{R}M_{R}.$
The \emph{trivial extension} of $R$ by $M$ is the ring $T(R,M)=R
\oplus M$ with the usual addition and the multiplication defined
as follows:
$$(r_{1},m_{1})(r_{2},m_{2})=(r_{1}r_{2},r_{1}m_{2}+m_{1}r_{2}).$$\\
\begin{ex}
Let $R=T(\mathbb{Z}, \mathbb{Q})$ be the trivial extension of
$\mathbb{Z}$ by $\mathbb{Q}, $ with automorphism $\alpha~
:R\longrightarrow R$ defined by $\alpha((a,s))=(a,s/2). $  The
ring $R$ is $\alpha$-Armendariz and $\alpha$-skew Armendariz but
is not $\alpha$-rigid (see \cite{a}). We prove that $R$ is
$q.\alpha$-skew Armendariz.

Indeed, Let $p=\sum \limits_{i=0}^{m}A_{i}x^{i}$ and $q=\sum
\limits_{j=0}^{n}B_{j}x^{j}$ be elements of $R[x;\alpha ]$, such
that $pR[x;\alpha ]q=\bar 0, $  where $A_{i}=(a_{i}, b_{i}),
B_{j}=(c_{j}, d_{j}) \in R$ for $0\leq i\leq m, 0\leq j\leq n. $
We claim that $A_{i} R \alpha ^{i}(B_{j})=\bar0.$

Let $p=(p_{0}, p_{1}), q=(q_{0}, q_{1}), $ where $p_{0}=\sum
\limits_{i=0}^{m}a_{i}x^{i}, q_{0}=\sum
\limits_{j=0}^{n}c_{j}x^{j} \in \mathbb{Z}[x]$ and $p_{1}=\sum
\limits_{i=0}^{m}b_{i}x^{i}$ and $q_{1}=\sum
\limits_{j=0}^{n}d_{j}x^{j} \in \mathbb{Q}[x]. $ Let $c=(e,f)$ be
an arbitrary element of $R, $ then $$pcq=(p_{0}, p_{1})(e,
f)(q_{0}, q_{1})=(p_{0}e q_{0}, p_{0}e q_{1}+p_{0}f
q_{0}+p_{1}eq_{0})=\bar0$$
\begin{enumerate}
 \item If $p_{0}=0,$ then $p_{1}eq_{0}=0,$ we get either $p_{1}$ or
 $e$ or $q_{0}$ is zero ($\mathbb{Q}[x]$ is a domain).

 We consider the following cases:
\begin{enumerate}
\item Let $p_{0}=0,$ then $a_{i}=0$ for all $i$ and $p_{1}=0,$
then $b_{i}=0$ for all $i.$ Then, clearly, $A_{i} R \alpha
^{i}(B_{j})=\bar0.$ \item If $e=0$ and  $p_{0}=0$ then
$$A_{i} c \alpha^{i}(B_{j})=(0, b_{i})(0, f)\alpha ^{i}(B_{j})=\bar0.$$
\item Let now $p_{0}=0$ and $q_{0}=0 $ then again
$$A_{i} c \alpha ^{i}(B_{j})=(0, b_{i})(e, f)\alpha^{i}((0,
d_{j}))=(0, b_{i}e)(0, d_{j}/2^{i})=\bar0$$
\end{enumerate}
\item If $e=0, $ one  can easily prove again that $A_{i} R \alpha
^{i}(B_{j})=\bar0.$ \item The case $q_{0}=0$ also is handled
similarly.
 \end{enumerate}
 Therefore $R$ is a $q.\alpha$-skew Armendariz ring.
 Similarly one can easily to show that, $R$ is $q.\alpha$-Armendariz
 ring as well.
 \end{ex}

The argument used in the previous example can be applied to show
that the ring $R=T(\mathbb{Z}, \mathbb{Q}),$ with automorphisms
$\alpha_{m} :R\longrightarrow R$ defined by
$\alpha_{m}((a,s))=(a,s/m), $ ($m$ is nonzero integer) also is
$q.\alpha$-skew Armendariz.
 \begin{ex}
 Let $R_{m}=T(\mathbb{Z},\mathbb{Z}_{m})$ with endomorphism $\alpha~
 :R_{m}\longrightarrow R_{m}$ defined by $\alpha((a, \bar s))=(a,
 -\bar s), $ where $m$ is a  prime number. Then $R_{m}$ is $q.\alpha$-skew Armendariz and $q.\alpha$-Armendariz
 ring utilizing the fact that $\mathbb{Z}_{m}$ is Armendariz ring by
 \cite{rege}.
 \end{ex}
Indeed, let $p=\sum \limits_{i=0}^{t}A_{i}x^{i}$ and $q=\sum
\limits_{j=0}^{n}B_{j}x^{j}$ be elements of $R_{m}[x;\alpha ]$,
such that $pR_{m}[x;\alpha ]q=\bar 0, $  where $A_{i}=(a_{i}, \bar
b_{i}), B_{j}=(c_{j},\bar d_{j}) \in R_{m}$ for $0\leq i\leq t,
0\leq j\leq n. $ Then $p=(p_{0},\bar p_{1}), q=(q_{0},\bar q_{1}),
$ where $p_{0}=\sum \limits_{i=0}^{t}a_{i}x^{i}, q_{0}=\sum
\limits_{j=0}^{n}c_{j}x^{j} \in \mathbb{Z}[x]$ and $\bar
p_{1}=\sum \limits_{i=0}^{t}\bar b_{i}x^{i}$ and $\bar q_{1}=\sum
\limits_{j=0}^{n}\bar d_{j}x^{j} \in \mathbb{Z}_{m}[x]. $ If
$c=(e,\bar f)$ be a arbitrary element of $R_{m}, $ then $pcq=\bar
0.$ Hence $p_{0}eq_{0}=0. $ Since $\mathbb{Z}[x]$ is a domain it
implies that either $p_{0}=0$ or $q_{0}=0$ or $e=0. $ Now going
over the options and using the fact that $\mathbb{Z}_{m}[x]$ is
integral domain (remind that $m$ is prime) it is easy to show that
$R_{m}$ is $q.\alpha$-skew Armendariz and $q.\alpha$-Armendariz
ring.
\begin{ex}
Let $R=\left\{\left(
\begin{array}{cc}
a & b \\
0 & 0
\end{array}
\right)| a,b\in F \right\},$ where $F$ is a field and $\alpha$ is
the endomorphism of $R$ defined by $\alpha\left(\left(
\begin{array}{cc}
a & b \\
0 & 0%
\end{array}%
\right)\right) =\left(
\begin{array}{cc}
a & -b \\
0 & 0
\end{array}%
\right).$  The ring $R$ is $q.\alpha$-Armendariz.
\end{ex}
Truly, let $p=\sum \limits_{i=0}^{m}A_{i}x^{i}$ and $q=\sum
\limits_{j=0}^{n}B_{j}x^{j}$ be in $R[x;\alpha ],$ with
$pR[x;\alpha ]q=0,$ where $A_{i}=\left(
\begin{array}{cc}
a_{i} & b_{i} \\
0 & 0
\end{array}
\right) $ and $B_{j}=\left(
\begin{array}{cc}
c_{j} & d_{j} \\
0 & 0
\end{array}
\right)$ for all $0\leq i\leq m, 0\leq j\leq n. $ Suppose that
$A_{0}\neq 0$ and $B_{0}\neq 0.$ Let $C=\left(
\begin{array}{cc}
e & f \\
0 & 0
\end{array}%
\right)$ be an arbitrary element of $R,$ then according to the
hypotheses $pCq=0.$  We have the following system of equalities:
\begin{eqnarray}
(0) &:&A_{0}CB_{0}=0,
\notag \\
(1) &:&A_{0}CB_{1}+A_{1}\alpha(C) \alpha(B_{0})=0 ,
\notag \\
 &&\vdots   \notag \\
 (m+n) &:&A_{0}CB_{m+n}+A_{1}\alpha(C) \alpha(B_{m+n-1})+...+ A_{m+n} \alpha^{m+n}(C) \alpha^{m+n}(B_{0})=0,
\notag \\
\end{eqnarray}
 From (0), we obtain:
 $$A_{0}CB_{0}=\left(
\begin{array}{cc}
a_{0}ec_{0} & a_{0}ed_{0} \\
0 & 0
\end{array}%
\right)=0.$$ Therefore $a_{0}ec_{0}=a_{0}ed_{0}=0.$ If
$a_{0}\neq0$ and $e \neq0, $ then $c_{0}=d_{0}=0, $ which is
contradiction. Hence $a_{0}=0$ or $e=0. $
\begin{enumerate}
\item If $a_{0}=0$ then we get $A_{0}RB_{j}=0$ for all $0\leq
j\leq n. $ From (1), we have:
$$A_{1}\alpha(C) \alpha(B_{0})=\left(
\begin{array}{cc}
a_{1}ec_{0} & -a_{1}ed_{0} \\
0 & 0
\end{array}%
\right)=0.$$ Then $a_{1}ec_{0}=a_{1}ed_{0}=0, $ hence $a_{1}=0$ or
$e=0.$ If $a_{1}=0$ then $A_{1}RB{j}=0$ for all $0\leq j\leq n. $
Continuing this process we derive $A_{i}RB{j}=0$ for all $0\leq
i\leq m $ and $0\leq j\leq n. $ \item If $e=0$ then
$$A_{i}CB_{j}=\left(
\begin{array}{cc}
a_{i} & b_{i} \\
0 & 0
\end{array}%
\right)\left(
\begin{array}{cc}
0 & f \\
0 & 0
\end{array}%
\right)\left(
\begin{array}{cc}
c_{j} & d_{j} \\
0 & 0
\end{array}%
\right)=0$$
\end{enumerate}
Therefore $R$ is $q.\alpha$-Armendariz ring. The similar
observation shows that $R$ is $q.\alpha$-skew Armendariz
 ring as well.

\begin{ex}
Let $F$ be a field. Consider the rings $R_{1}=\left\{\left(
\begin{array}{cc}
0 & a \\
0 & b
\end{array}
\right)| a,b\in F \right\}$ and $R_{2}=\left\{\left(
\begin{array}{cc}
0 & d \\
0 & 0
\end{array}
\right)| d\in F \right\}$ with the endomorphisms
$\alpha_{1}:R_1\longrightarrow R_1$ and
$\alpha_2:R_2\longrightarrow R_2,$  defined by
$\alpha_{1}\left(\left(
\begin{array}{cc}
0 & a \\
0 & b%
\end{array}%
\right)\right) =\left(
\begin{array}{cc}
0 &-a \\
0 & b
\end{array}%
\right), $ and $\alpha_{2}\left(\left(
\begin{array}{cc}
0 & d \\
0 & 0%
\end{array}%
\right)\right) =\left(
\begin{array}{cc}
0 &-d \\
0 & 0
\end{array}%
\right), $ respectively. Then $R_{1}, R_{2}$ are
$\alpha$-Armendariz due to \cite{a}. Hence they are $\alpha$-skew
Armendariz. Therefore they are $q.\alpha$-Armendariz and
$q.\alpha$-skew Armendariz rings.
\end{ex}
\section{Quasi-Armendariz property on skew polynomial Laurent
series rings} In this section we introduce the concepts Laurent
quasi $\alpha$-skew Armendariz and Laurent quasi $\alpha$-skew
Armendariz power series rings. They are in certain sense
generalizations of $q.\alpha$-Armendariz rings. We extend results
from \cite{w} to the case of Laurent quasi $\alpha$-skew
Armendariz and Laurent quasi $\alpha$-skew Armendariz power series
rings. The results of \cite{w} concerned the extension from
$\alpha$-skew Armendariz rings to the $q.\alpha$-Armendariz rings
case. Throughout, this section, $\alpha$ denotes a ring
automorphism. We start with the following definition.
\begin{defn}
A ring $R$ is said to be \emph{$q.\alpha$-skew Armendariz ring of
Laurent type} (or shortly, \emph{Laurent $q.\alpha$-skew
Armendariz}) if whenever $p=\sum \limits_{i=-m}^{n}a_{i}x^{i}$ and
$q=\sum \limits_{j=-t}^{s}b_{j}x^{j}$ in $R[x, x^{-1}; \alpha ], $
satisfy $pR[x, x^{-1}; \alpha ]q=0$ we have $a_{i}R
\alpha^{i}(b_{j})=0, $ $-m\leq i\leq n$ and $-t\leq j\leq s.$
\end{defn}
 Jokanovi$\acute{c}$ \cite{w}, proved that $R$ is $\alpha$-skew Armendariz
 ring if and only if $R$ is $\alpha$-skew Armendariz of Laurent type.
 Here is more general result concerning this matter.
 \begin{theor}
The following statements are equivalent:
\begin{enumerate}
 \item $R$ is a $q.\alpha$-skew Armendariz ring .
 \item $R$ is a Laurent $q.\alpha$-skew Armendariz ring.
\end{enumerate}
 \begin{proof}
Since $R[x; \alpha ]\subset R[x, x^{-1}; \alpha ]$ it is
sufficient to show that if $R$ is $q.\alpha$-skew Armendariz ring
then $R$ is $q.\alpha$-skew Armendariz ring of Laurent type.
Suppose that $p=\sum \limits_{i=-m}^{n}a_{i}x^{i}$ and $q=\sum
\limits_{j=-t}^{s}b_{j}x^{j}$ be elements of $R[x, x^{-1}; \alpha
], $ such that $pR[x, x^{-1}; \alpha ]q~=~0. $ Obviously  for
arbitrary element $c$ of $R$ we have $pcq=0.$ Consider $x^{l}p$
and $qx^{t}$ which are in $R[x;~\alpha ]$ then $x^{l}pcqx^{t}=0.$
It implies that $\alpha^{l}(a_{i}) \alpha^{l}(c)
\alpha^{i+l}(b_{j})=0$ for $-m\leq i\leq n, -t\leq j\leq s.$
Moreover, $\alpha^{l}(a_{i}c\alpha^{i}(b_{j}))=0, $ since $\alpha$
is automorphism. Hence $a_{i}R\alpha^{i}(b_{j})=0.$ Therefore $R$
is Laurent $q.\alpha$-skew Armendariz ring.
 \end{proof}
 \end{theor}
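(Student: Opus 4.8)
The plan is to prove the equivalence of the two statements. Since $R[x;\alpha]$ embeds as a subring of $R[x,x^{-1};\alpha]$, the implication from Laurent $q.\alpha$-skew Armendariz to ordinary $q.\alpha$-skew Armendariz is immediate: any pair of ordinary skew polynomials $p,q\in R[x;\alpha]$ with $pR[x;\alpha]q=0$ can be viewed as Laurent polynomials, and the Laurent condition then forces $a_iR\alpha^i(b_j)=0$. So the substance of the theorem lies entirely in the forward direction, and that is where I would concentrate.

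For the hard direction, suppose $R$ is $q.\alpha$-skew Armendariz and take $p=\sum_{i=-m}^{n}a_ix^i$ and $q=\sum_{j=-t}^{s}b_jx^j$ in $R[x,x^{-1};\alpha]$ with $pR[x,x^{-1};\alpha]q=0$. The central idea is to clear the negative powers by multiplying on the left by a suitable power $x^l$ of $x$, pushing both polynomials into the ordinary skew polynomial ring $R[x;\alpha]$ where the hypothesis applies. Concretely, choosing $l\geq m$ ensures $x^lp\in R[x;\alpha]$, and one likewise multiplies $q$ on the right by $x^t$ so that $qx^t\in R[x;\alpha]$. First I would specialize the annihilator condition: for an arbitrary $c\in R$ we have $pcq=0$, hence $x^l(pcq)x^t=0$, i.e. $(x^lp)\,c'\,(qx^t)=0$ for the appropriate shifted element, and these are genuine elements of $R[x;\alpha]$.

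Then I would apply the $q.\alpha$-skew Armendariz hypothesis to the shifted polynomials. The coefficients of $x^lp$ are $\alpha^l(a_i)$ and those of $qx^t$ are $b_j$, with the indices now all nonnegative. The hypothesis yields $\alpha^l(a_i)\,R\,\alpha^{i+l}(b_j)=0$, or after inserting the intermediate element, $\alpha^l(a_i)\,\alpha^l(c)\,\alpha^{i+l}(b_j)=0$ for all $i,j$ and all $c\in R$. Writing this as $\alpha^l\!\bigl(a_i\,c\,\alpha^i(b_j)\bigr)=0$ is the decisive step: because $\alpha$ is an automorphism (the standing assumption of this section), it is injective, so $\alpha^l$ is injective and we may cancel it to conclude $a_i\,c\,\alpha^i(b_j)=0$. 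Since $c$ was arbitrary, $a_iR\alpha^i(b_j)=0$ for all $-m\leq i\leq n$ and $-t\leq j\leq s$, which is exactly the Laurent condition.

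The main obstacle, and the reason the automorphism hypothesis is indispensable, is the final cancellation of $\alpha^l$: without injectivity of $\alpha$ one could not pass from $\alpha^l(a_ic\alpha^i(b_j))=0$ back to $a_ic\alpha^i(b_j)=0$, and the negative-degree shift trick would not close the argument. A secondary point requiring care is bookkeeping on the exponents — verifying that multiplication by $x^l$ on the left genuinely transforms the coefficient $a_i$ into $\alpha^l(a_i)$ under the skew multiplication $xr=\alpha(r)x$, and that the degree shifts align so that the index $i+l$ appears on the $b_j$ side — but this is routine once the skew-commutation rule is applied consistently.
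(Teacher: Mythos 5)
Your proposal is correct and follows essentially the same route as the paper: reduce to the forward implication via the inclusion $R[x;\alpha]\subset R[x,x^{-1};\alpha]$, clear the negative exponents by forming $x^{l}p$ and $qx^{t}$ in $R[x;\alpha]$, apply the $q.\alpha$-skew Armendariz hypothesis to get $\alpha^{l}(a_i)\,\alpha^{l}(c)\,\alpha^{i+l}(b_j)=0$, and cancel $\alpha^{l}$ using that $\alpha$ is an automorphism. Your explicit remarks on why injectivity of $\alpha$ and surjectivity (to realize an arbitrary $r\in R$ as $\alpha^{l}(c)$) are both needed make the argument, if anything, slightly more careful than the paper's.
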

Recall that, a ring $R$ is called \emph{$q.\alpha$-skew power
series Armendariz ring}, if for every $p=\sum
\limits_{i=0}^{\infty}a_{i}x^{i}$ and $q=\sum
\limits_{j=0}^{\infty}b_{j}x^{j} \in R[[x; \alpha ]],$ the
condition $pR[[x; \alpha ]]q=0 $ implies $a_{i}R
\alpha^{i}(b_{j})=0$ for all $0\leq i \leq \infty, 0\leq j \leq
\infty.$
 \begin{defn}
 A ring $R$ is said to be a \emph{$q.\alpha$-skew power series Armendariz ring of
 Laurent type, } if for every $p=\sum \limits_{i=-n}^{\infty}a_{i}x^{i}$ and $q=\sum
\limits_{j=-m}^{\infty}b_{j}x^{j}$ in $R[[x, x^{-1}; \alpha ]], $
satisfy $pR[[x, x^{-1}; \alpha ]]q~=~0$ implies $a_{i}R
\alpha^{i}(b_{j})~=~0,$ for all $-n\leq i \leq \infty$ and $-m\leq
j \leq \infty.$
 \end{defn}
  \begin{theor}
 The following conditions are equivalent:
  \begin{enumerate}
 \item $R$ is a $q.\alpha$-skew power series Armendariz ring.
 \item $R$ is a $q.\alpha$-skew power series Armendariz ring of
 Laurent type.
 \end{enumerate}
\end{theor}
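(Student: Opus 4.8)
The plan is to run the argument in exact parallel with the proof of the previous theorem, replacing finite skew Laurent polynomials by skew Laurent power series. Since $R[[x;\alpha]]$ is a subring of $R[[x,x^{-1};\alpha]]$, I would dispose of the implication $(2)\Rightarrow(1)$ as the containment direction, exactly as in the previous theorem: a pair $p,q\in R[[x;\alpha]]$ is in particular a pair of Laurent power series, so the Laurent-type coefficient conclusion specializes immediately to the ordinary one. Thus the real work is in $(1)\Rightarrow(2)$.

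For $(1)\Rightarrow(2)$, assume $R$ is $q.\alpha$-skew power series Armendariz and let $p=\sum_{i=-n}^{\infty}a_ix^i$ and $q=\sum_{j=-m}^{\infty}b_jx^j$ in $R[[x,x^{-1};\alpha]]$ satisfy $pR[[x,x^{-1};\alpha]]q=0$. The first step is to clear the negative powers by a shift. Multiplying on the left by $x^{n}$ and on the right by $x^{m}$, and using that $\alpha$ is an automorphism so that $x^{n}a_ix^{i}=\alpha^{n}(a_i)x^{i+n}$, I obtain $x^{n}p=\sum_{I\ge 0}\alpha^{n}(a_{I-n})x^{I}$ and $qx^{m}=\sum_{J\ge 0}b_{J-m}x^{J}$; the lowest exponents have become $0$, so both are honest one-sided power series, i.e.\ elements of $R[[x;\alpha]]$. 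This is the step that makes condition $(1)$ applicable, since condition $(1)$ is only available for power series with no negative powers.

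The second step transports the annihilator condition through the shift. For every $h\in R[[x;\alpha]]\subseteq R[[x,x^{-1};\alpha]]$ one has $(x^{n}p)\,h\,(qx^{m})=x^{n}(phq)x^{m}=0$, because $phq=0$ by hypothesis; hence $(x^{n}p)\,R[[x;\alpha]]\,(qx^{m})=0$ with both factors in $R[[x;\alpha]]$. Applying the $q.\alpha$-skew power series Armendariz property to $x^{n}p$ and $qx^{m}$ and matching coefficients then yields $\alpha^{n}(a_i)\,R\,\alpha^{\,n+i}(b_j)=0$ for all $i\ge -n$ and $j\ge -m$, the exponent $n+i$ being the position of $\alpha^{n}(a_i)$ in $x^{n}p$. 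Finally, applying the inverse automorphism $\alpha^{-n}$ and using $\alpha^{-n}(R)=R$ gives $a_i\,R\,\alpha^{i}(b_j)=0$, which is precisely the Laurent-type conclusion.

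I expect the only delicate point—minor, but the place where a slip is easiest—to be the index bookkeeping: verifying that $x^{n}p$ and $qx^{m}$ genuinely lie in $R[[x;\alpha]]$ with no surviving negative powers, and tracking the shifted positions so that the exponent on $\alpha(b_j)$ comes out as $n+i$ before the descent by $\alpha^{-n}$. Once this is pinned down the infinite-series case is formally identical to the polynomial case, and the automorphism hypothesis on $\alpha$ is exactly what legitimizes both the two-sided shift and the final application of $\alpha^{-n}$.
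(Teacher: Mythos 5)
Your proposal is correct and follows essentially the same route as the paper, which proves this theorem by deferring to the proof of the preceding Laurent polynomial theorem: there the negative powers are likewise cleared by multiplying by $x^{l}$ on the left and $x^{t}$ on the right, the $q.\alpha$-skew (power series) Armendariz hypothesis is applied to the shifted series, and the conclusion is pulled back using that $\alpha$ is an automorphism. If anything, your version is slightly more careful than the paper's, since you verify $(x^{n}p)\,h\,(qx^{m})=0$ for \emph{all} $h\in R[[x;\alpha]]$ before invoking the definition, whereas the paper only checks this for constants $c\in R$.
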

\begin{proof}
The proof is similar to that of Theorem 3.
\end{proof}

\end{document}